\newtheorem{theorem}{Theorem}[section]
\newtheorem{lemma}[theorem]{Lemma}
\newcommand{\bC}{\mathbb{C}}
\newcommand{\bQ}{\mathbb{Q}}
\newcommand{\bR}{\mathbb{R}}
\newcommand{\bZ}{\mathbb{Z}}
\newcommand{\bF}{\mathbb{F}}
\newcommand{\bI}{\mathbb{I}}
\newcommand{\GL}{\mathrm{GL}}
\newcommand{\M}{\mathrm{M}}
\newcommand{\disp}{\displaystyle}
\def\disp{\displaystyle}
\numberwithin{equation}{section}
\begin{document}
\title[title for the running head]{Title}
\keywords{$p$-adic differential equations, differential Modules, Frobenius antecedant}
\baselineskip=17pt%20pt
\title[A note on non-Robba $p$-adic differential equations]
{A note on non-Robba $p$-adic differential equations}
\author{ Said Manjra}
{\curraddr{
Dept of  Math, Imam University.
P.Box 90950. Riyadh. 11623
Saudi Arabia}
\email{smanjra@uottawa.ca}
\subjclass[2010]{12H25}
\thanks{}
\dedicatory{}
\maketitle
\begin{abstract}
Let $\mathcal{M}$ be a differential module, whose coefficients are analytic elements on an open  annulus $I$
($\subset \bR_{>0}$) in a valued field, complete and algebraically closed of inequal characteristic,
and let $R(\mathcal{M}, r)$ be the radius of convergence of its solutions in the
neighbourhood of the generic point $t_r$ of absolute value $r$, with $r\in I$. Assume that $R(\mathcal{M}, r)<r$ on $I$ and,
 in  the logarithmic coordinates, the function $r\longrightarrow R(\mathcal{M}, r)$ has only one slope on $I$.
 In this paper, we prove that for any
$r\in I$, all the solutions of $\mathcal{M}$ in the neighborhood of
 $t_r$  are analytic and
bounded in the disk $D(t_r,R(\mathcal{M},r)^-)$.
\end{abstract}
\section{Notations and Preliminaries}
Let $p$ be a prime number, $\bQ_p$ the completion of the field of
rational numbers for the $p$-adic absolute value $|.|$,  $\bC_p$ the
completion of the algebraic closure of $\bQ_p$, and $\Omega_p$ a
$p$-adic complete and algebraically closed field containing $\bC_p$
such that its value group is ${\bR}_{\ge 0}$ and the residue class
field is strictly transcendental over ${\bF}_{p^{\infty}}$.  For any
positive real $r$, $t_r$ will denote a generic point of
$\mathrm{\Omega}$ such that $|t_r|=r$. Let $I$ be a bounded interval
in ${\bR}_{>0}$. We denote by $\mathcal{A}(I)$ the ring of analytic
functions, on the annuli $\mathcal{C}(I):=\{a\in \Omega_p\;|\; |a|\in
I\}$, $\mathcal{A}(I)=\Big\{\disp\sum_{n\in{\bZ} }a_nx^n\in
\bC_p[[x,1/x]]\;\Big|\;\lim_{n\to\mp\infty}|a_n|r^n=0, \forall\; r\in
I\Big\},$ and by $\mathcal{H}(I)$ the completion of the ring of rational
fractions $f$ of $\bC_p(x)$  having no pole in $\mathcal{C}(I)$ with
respect to the norm $\|f\|_{I}:=\disp\sup_{r\in I}|f(t_{r})|$. It is well
known that $\mathcal{H}(I)\subseteq \mathcal{A}(I)$, with equality
if $I$ is closed. We define, for any $r \in I$, the absolute value $|.|_r$ over
$\mathcal{A}(I)$ by $\Big|\disp\sum_{n\in{\bZ}
}a_nx^n\Big|_r=\sup_{n\in{\bZ}}|a_n|r^n$.

Let $R(I)$ denotes $\mathcal{A}(I)$ or $\mathcal{H}(I)$. A free $R(I)$-module
$\mathcal{M}$ of finite rank $\mu$ is said to be
$R(I)$-differential module if it is equipped with a $R(I)$-linear map
$D:\mathcal{M}\to \mathcal{M}$ such that $D(am)=\partial(a)m+aD(m)$
for any $a\in R(I)$ and any $m\in \mathcal{M}$   where
$\partial=d/dx$.
 To each $R(I)$-basis $\{e_{i}\}_{1\le i\le \mu}$ of $\mathcal
   M$ over $R(I)$  corresponds a matrix $G=(G_{ij})\in {M}_\mu(R(I))$
  satisfying  $D(e_{i})=\disp\sum_{j=1}^{\mu}G_{ij}e_{j}$, called the matrix of $\partial$
   with respect to the $R(I)$-basis $\{e_{i}\}_{1\le i\le \mu}$ or simply an associated matrix to $\mathcal M$, together with a
  differential system $\partial X=GX$ where $X$ denotes a column
  vector $\mu\times 1$  or $\mu\times \mu$ matrix (see \cite{C}, \cite{CD}).
  If $G'\in {M}_\mu(R(I))$ is the matrix of $\partial$ with respect to another $R(I)$-basis $\{e'_{i}\}_{1\le i\le \mu}$  of $\mathcal M$
  and if $H=(H_{ij})\in {\GL}_\mu(R(I))$ is the change of basis matrix defined by
  $e'_i=\disp\sum_{i=1}^\mu H_{ij}e_i$ for all $1\le i\le \mu$, it is  known that:
  \begin{itemize}
     \item[-] the matrices  $G$ and $G'$ are related by the formula
  $G'=HGH^{-1}+\partial(H)H^{-1}.$
The matrix $HGH^{-1}+\partial(H)H^{-1}$ is denoted $H[G]$.
     \item[-]  if $Y$ is a solution matrix for the  system $\frac{d}{dx}X=GX$ with coefficients in a differential field extension of $R(I)$,  then the matrix $HY$ is a solution matrix for $\frac{d}{dx}X=H[G]X$.
   \end{itemize}
\subsection{Generic radius of convergence.}
Let ${\mathcal M}$ be an $R(I)$-differential module of rank $\mu$,
 $G=(G_{ij})\in {\M}_\mu(R(I))$  an associated matrix to
$\mathcal{M}$, $(G_{n})_n$ a sequence of matrices defined by
$$G_0=\bI_\mu\quad \text{and}\quad
G_{n+1}=\partial(G_{n})+G_{n}G,$$  and $||G||_r=\disp\max_{ij}|G_{ij}|_r$ be the norm of $G$ associated to the absolute value $|.|_r$.
For any $r\in I$, the quantity $R(\mathcal{M},r)=\min(r,\disp\liminf_{n\to\infty}||G_n||_r^{-1/n})$
 represents the radius of
convergence in the generic disc $D(t_r,r^-)$ of the solution matrix
$$\mathcal{U}_{G, t_r}(x)=\sum_{n\ge 0}\frac{G_n(t_r)}{n!}(x-t_r)^n$$ of
the system $\frac{d}{dx}X=GX$ with $X(t_r)=\bI_\mu$. We know
that the function $r\rightarrow R(\mathcal{M},r)$ is independent of the choice of basis and the ring $R(I)$
\cite[Proposition 1.3]{CD}, and the
graph of the  map $\rho\mapsto \log \circ R(\mathcal{M},\exp
(\rho))$, on any closed subinterval of $I$, is a  concave polygon
with rational slopes \cite[Theorem 2]{P}. This graph is called
the generic polygon of the convergence of $\mathcal{M}$. The system
 $\partial X=GX$ is said to have an analytic and bounded solution in
the disk $D(t_r,R(\mathcal{M},r)^-)$ if
$$\sup_{n\ge 0}\Big|\Big|\frac{G_n}{n!}\Big|\Big|_{r}R(\mathcal{M},r)^n<\infty.$$ The
$R(I)$-differential module $\mathcal{M}$ is said to be non-Robba if
$R(\mathcal{M},r)<r$ for all $r\in I$.
\subsection{Frobenius.} Let $\varphi:
\mathcal{C}(I)\to\mathcal{C}(I^p)$ be the Frobenius ramification
$x\mapsto x^p$, where $I^p$ is the image of $I$ by the map $x\mapsto
x^p$. A $R(I^p)$-differential module $\mathcal{N}$ is said to be a
Frobenius antecedent of an $R(I)$-differential module $\mathcal{M}$
if $\mathcal{M}$ is isomorphic to the inverse image
$\varphi^*\mathcal{N}$ of $\mathcal{N}$. In other words, if there
exists a matrix $F\in {\M}_\mu(R(I^p))$ of the derivation $d/dz$
(where $z=x^p$) in some $R(I^p)$-basis of $\mathcal{N}$ such that $p
x^{p-1}F(x^p)$ is a matrix of $d/dx$ in some $R(I)$-basis of
$\mathcal{M}$. The existence of such a Frobenius antecedent depends
of the values of the function $r\mapsto R(\mathcal{M},r)$. Recall
the Frobenius structure theorem of Christol-Mebkhout \cite[Theorem
4.1-4]{CM} where $\pi=p^{-1/p-1}$:
\begin{theorem}
\label{fro} Let $h$ be a positive integer and let $\mathcal{M}$ be
a $R(I)$-differential module such that $R(\mathcal{M},r)>r
\pi^{1/p^{h-1}}$ for all $r\in I$. Then,
 there exists an
$R(I^{p^h})$-differential module $\mathcal{N}_h$ such that
$(\varphi^h)^*{\mathcal{N}_h}\cong {\mathcal{M}}$ and
$R(\mathcal{M},r)^{p^h}=R(\mathcal{N}_h,r^{p^h})$ for any $r\in I$,
and  $\mathcal{N}_h$ is called a Frobenius  antecedent of order $h$ of $\mathcal{M}$.
\end{theorem}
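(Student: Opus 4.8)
The plan is to reduce first to the case $h=1$ by induction on $h$, and then to construct the order-one antecedent by Galois descent along $\varphi$, manufacturing the descent datum out of $p$-adic parallel transport, whose convergence is exactly what the hypothesis on $R(\mathcal{M},r)$ guarantees. For the induction, observe that since $0<\pi<1$ and $1/p^{h-1}\le 1$ one has $\pi^{1/p^{h-1}}\ge\pi$, so the order-$h$ hypothesis already gives $R(\mathcal{M},r)>r\pi$ for all $r\in I$, which is the order-one hypothesis. Granting the order-one statement, it produces $\mathcal{N}_1$ over $R(I^p)$ with $\varphi^*\mathcal{N}_1\cong\mathcal{M}$ and $R(\mathcal{N}_1,r^p)=R(\mathcal{M},r)^p$. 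Writing $s=r^p\in I^p$, the bound becomes $R(\mathcal{N}_1,s)=R(\mathcal{M},r)^p> r^p\pi^{p/p^{h-1}}=s\,\pi^{1/p^{h-2}}$, i.e.\ precisely the order-$(h-1)$ hypothesis for $\mathcal{N}_1$ on the annulus $I^p$. Applying the inductive hypothesis to $\mathcal{N}_1$ gives $\mathcal{N}_h$ over $R((I^p)^{p^{h-1}})=R(I^{p^h})$ with $(\varphi^{h-1})^*\mathcal{N}_h\cong\mathcal{N}_1$ and $R(\mathcal{N}_h,s^{p^{h-1}})=R(\mathcal{N}_1,s)^{p^{h-1}}$; composing Frobenius maps yields $(\varphi^h)^*\mathcal{N}_h\cong\varphi^*\mathcal{N}_1\cong\mathcal{M}$ and $R(\mathcal{N}_h,r^{p^h})=R(\mathcal{M},r)^{p^h}$, as required.

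For $h=1$, assume $R(\mathcal{M},r)>r\pi$ on $I$ and fix an associated matrix $G$. The map $\varphi\colon x\mapsto x^p$ realizes $R(I)$ as a Galois extension of $R(I^p)$ with group $W_p\subset\Omega_p$, the group of $p$-th roots of unity acting by $x\mapsto\zeta x$; indeed $R(I)=R(I^p)[x]/(x^p-z)$ is free of rank $p$ on $1,x,\dots,x^{p-1}$ and étale because $x$ is invertible on $\mathcal{C}(I)$. By Galois descent it suffices to endow $\mathcal{M}$ with a semilinear descent datum, namely for each $\zeta\in W_p$ an isomorphism of differential modules $C_\zeta\colon\mathcal{M}\xrightarrow{\sim}[\zeta]^*\mathcal{M}$ relative to the $W_p$-invariant derivation $d/dz$, $z=x^p$, satisfying the cocycle identity. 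I would construct $C_\zeta$ as parallel transport: let $H(y,x)=\mathcal{U}_{G,x}(y)=\sum_{n\ge0}\frac{G_n(x)}{n!}(y-x)^n$ be the transfer matrix, so that $\partial_yH=G(y)H$ and $\partial_xH=-HG(x)$, and set $C_\zeta(x)=H(\zeta x,x)$. Differentiating gives $\partial(C_\zeta)=\zeta\,G(\zeta x)C_\zeta-C_\zeta G$, that is $C_\zeta[G]=\zeta\,G(\zeta x)$; since $\zeta\,G(\zeta x)$ is exactly the matrix of $[\zeta]^*\mathcal{M}$, the matrix $C_\zeta$ is the sought isomorphism.

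The crux is that $C_\zeta$ genuinely defines an element of $\GL_\mu(R(I))$. At the generic point $t_r$ one has $|\zeta t_r-t_r|=|\zeta-1|\,r=\pi r<R(\mathcal{M},r)$, so $\zeta t_r$ lies strictly inside the disc of convergence of $\mathcal{U}_{G,t_r}$; hence $C_\zeta(t_r)=\mathcal{U}_{G,t_r}(\zeta t_r)$ converges and is invertible, and $\|C_\zeta\|_r<\infty$ for each $r$. I expect the main obstacle to be upgrading this pointwise convergence to membership of $C_\zeta$ in $R(I)$ as an analytic element over the whole annulus; this should follow from the hypothesis holding uniformly on $I$ together with the continuity and concavity of $r\mapsto R(\mathcal{M},r)$ on closed subintervals. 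Once this is secured, the cocycle identity $C_{\zeta\zeta'}(x)=C_{\zeta'}(\zeta x)\,C_\zeta(x)$ holds because the finite set $W_p\,t_r$ lies inside the single disc $D(t_r,R(\mathcal{M},r)^-)$, so transports compose within one Taylor expansion (equivalently, both sides are the canonical transport $\mathcal{M}\to[\zeta\zeta']^*\mathcal{M}$ and agree by rigidity). Galois descent then furnishes $\mathcal{N}_1$ over $R(I^p)$ with $\varphi^*\mathcal{N}_1\cong\mathcal{M}$; concretely, it yields a basis in which the matrix $\tilde G$ of $\mathcal{M}$ satisfies $\tilde G(\zeta x)=\zeta^{-1}\tilde G(x)$, forcing $\tilde G(x)=p\,x^{p-1}F(x^p)$ with $F\in\M_\mu(R(I^p))$.

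Finally I would verify the radius identity $R(\mathcal{N}_1,r^p)=R(\mathcal{M},r)^p$. If $Y$ is the fundamental solution of $\frac{d}{dz}X=FX$ at the generic point $t_{r^p}$ of absolute value $r^p$, then $Y(x^p)$ solves $\mathcal{M}$ near $t_r$ with $Y(t_r^p)=\bI_\mu$, hence coincides with $\mathcal{U}_{\mathcal{M},t_r}$. Expanding $x^p-t_r^p$ about $x=t_r$ shows that for $|x-t_r|=\rho\le r$ one has $|x^p-t_r^p|=p^{-1}r^{p-1}\rho$ when $\rho\le\pi r$ and $|x^p-t_r^p|=\rho^p$ when $\pi r\le\rho\le r$, with matching value $\pi^p r^p$ at $\rho=\pi r$. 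Since $R(\mathcal{M},r)>\pi r$, the convergence condition $|x^p-t_r^p|<R(\mathcal{N}_1,r^p)$ is governed by the second regime, giving $\rho<R(\mathcal{N}_1,r^p)^{1/p}$; thus $R(\mathcal{M},r)=R(\mathcal{N}_1,r^p)^{1/p}$, i.e.\ $R(\mathcal{N}_1,r^p)=R(\mathcal{M},r)^p$. This also shows $R(\mathcal{N}_1,r^p)>\pi^p r^p$, confirming the hypothesis propagation used in the inductive step and completing the reduction.
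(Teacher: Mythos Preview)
The paper does not prove this statement at all: Theorem~\ref{fro} is merely quoted from Christol--Mebkhout \cite[Theorem~4.1-4]{CM} as background (note the phrase ``Recall the Frobenius structure theorem of Christol--Mebkhout'' immediately preceding it), and is then used as a black box in the proof of Lemma~\ref{density}. So there is no ``paper's own proof'' to compare your proposal against.

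That said, your sketch is broadly the right strategy and is close in spirit to how Christol--Mebkhout actually argue: reduce to $h=1$ by iteration, then for $h=1$ use that the hypothesis $R(\mathcal{M},r)>\pi r$ guarantees that all the $p$-th-root-of-unity translates $\zeta t_r$ lie inside the generic disc $D(t_r,R(\mathcal{M},r)^-)$, so parallel transport along $\mu_p$ furnishes a descent datum for $\varphi$. Your computation of $|x^p-t_r^p|$ in the two regimes $\rho\le\pi r$ and $\pi r\le\rho\le r$, and the resulting identity $R(\mathcal{N}_1,r^p)=R(\mathcal{M},r)^p$, are correct.

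The one genuine gap you yourself flag is the crux of the matter: showing that the transport matrices $C_\zeta(x)=\mathcal{U}_{G,x}(\zeta x)$ are not just convergent at each generic point but actually lie in $\GL_\mu(R(I))$ as analytic elements on the annulus. Your appeal to ``continuity and concavity of $r\mapsto R(\mathcal{M},r)$'' is not by itself enough; one needs a uniform estimate of the type $\sup_n\|G_n/n!\|_r(\pi r)^n<\infty$ locally on $I$, together with an argument that the resulting power series in $(\zeta-1)x$ has coefficients in $R(I)$ and is summable there. Christol--Mebkhout handle this carefully; if you want a self-contained proof you should fill in precisely this step rather than leave it as an expectation.
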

In particular, if a $R(I)$-differential module $\mathcal{M}$ satisfies $R(\mathcal{M},r)>r
\pi$ for all $r\in I$, it has a Frobenius antecedent.
\section{Main theorem}
In this section, $I$ denotes an open interval in $\bR_{>0}$ and $\mathcal{M}$  a non-Robba
$\mathcal{A}(I)$-differential module associated to some matrix $G\in
\M_\mu(\mathcal{A}(I))$.
\begin{theorem}
Assume that  the generic polygon of convergence of $\mathcal{M}$ has only
one slope. Then $$\disp\sup_{n\ge
0}\Big\|\frac{G_n}{n!}\Big\|_{r}R(\mathcal{M},r)^n<\infty\quad
\text{for all}\quad r\in I.$$
\end{theorem}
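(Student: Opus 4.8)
The plan is to reduce the problem to the Robba case (where $R(\mathcal{M},r)=r$) via iterated Frobenius antecedents. The key observation is that since the generic polygon has only one slope on the open interval $I$, the function $r\mapsto R(\mathcal{M},r)/r$ is monotone in $r$, so on any closed subinterval $J\subset I$ it is bounded away from $0$; hence for $h$ large enough (depending on $J$) we have $R(\mathcal{M},r)>r\,\pi^{1/p^{h-1}}$ for all $r\in J$, and Theorem \ref{fro} furnishes a Frobenius antecedent $\mathcal{N}_h$ of order $h$ on $J^{p^h}$ with $R(\mathcal{N}_h, r^{p^h})=R(\mathcal{M},r)^{p^h}$. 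Applying the antecedent repeatedly (or choosing $h$ optimally) one arrives at a module whose generic radius is so close to the generic disc radius that the slope condition forces $R(\mathcal{N}_h,s)=s$ identically, i.e. the antecedent is \emph{solvable} on $J^{p^h}$; for such a module the solution matrix $\mathcal{U}_{F,t_s}$ converges (and is bounded by $1$) on the full generic disc $D(t_s,s^-)$, which is exactly the boundedness statement $\sup_n\|F_n/n!\|_s\, s^n<\infty$.

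The second step is to transfer this boundedness back down through the Frobenius pullbacks. If $F\in\M_\mu(R(I^{p^h}))$ is the matrix of $d/dz$ for $\mathcal{N}_h$ and $G$ the matrix of $d/dx$ for $\mathcal{M}$, then up to a change of basis $G(x)=p^h x^{p^h-1}F(x^{p^h})$, and one has explicit formulas expressing the iterated matrices $G_n$ in terms of the $F_m$ (the ``Frobenius'' relations between $\mathcal{U}_{G,t_r}$ and $\mathcal{U}_{F,t_{r^{p^h}}}$; cf. \cite{CM}, \cite{CD}). Concretely, $\mathcal{U}_{G,t_r}(x)$ is obtained from $\mathcal{U}_{F,t_{r^{p^h}}}(z)$ by substituting $z=x^{p^h}$ together with the change-of-basis matrix, and the norm estimate $\|x^{p^h}-t_r^{p^h}\|\le r^{p^h-1}\|x-t_r\|$ on the generic disc shows that convergence and boundedness of $\mathcal{U}_{F,t_{r^{p^h}}}$ on $D(t_{r^{p^h}}, (r^{p^h})^-)=D(t_{r^{p^h}},R(\mathcal{N}_h,r^{p^h})^-)$ implies convergence and boundedness of $\mathcal{U}_{G,t_r}$ on $D(t_r, R(\mathcal{M},r)^-)$, because $R(\mathcal{M},r)^{p^h}=R(\mathcal{N}_h,r^{p^h})$. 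This yields the finiteness of $\sup_{n\ge 0}\|G_n/n!\|_r R(\mathcal{M},r)^n$ for each fixed $r$ in the closed subinterval, and since every $r\in I$ lies in some such subinterval, the theorem follows for all $r\in I$.

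There are two points requiring care. First, the Frobenius structure theorem as quoted is stated over an interval with the hypothesis $R(\mathcal{M},r)>r\pi^{1/p^{h-1}}$ holding uniformly; since $I$ is only open and the inequality $R(\mathcal{M},r)<r$ is strict but not uniform near the endpoints, one genuinely must localize to closed subintervals $J$ and let $h=h(J)$ grow — the single-slope hypothesis is what guarantees that on each fixed $J$ a finite $h$ suffices, and also that after pulling back the antecedent still has a single slope (slopes transform by the rule $R(\mathcal{N}_h,s)=R(\mathcal{M},s^{1/p^h})^{p^h}$, which in logarithmic coordinates is an affine reparametrization preserving ``one slope''). Second, and this is the main obstacle, one must verify that the single-slope antecedent of top order is actually solvable rather than merely having radius close to the disc radius: the point is that a one-slope generic polygon on an interval, once the slope is pushed (by taking antecedents) into the range where Theorem \ref{fro} applies with arbitrarily large $h$, can only be consistent with the concavity and rationality constraints if the limiting slope is $0$, i.e. the radius equals $r$. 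Making this dichotomy precise — that either $\mathcal{M}$ already has a solvable antecedent of some finite order, or the single slope is incompatible with non-Robba behaviour — is the technical heart of the argument, and it is where the hypotheses ``non-Robba'' and ``one slope'' are used together.
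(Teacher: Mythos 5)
Your reduction runs in the wrong direction, and this breaks the argument at its core. Since $\pi=p^{-1/(p-1)}<1$, the thresholds $\pi^{1/p^{h-1}}$ \emph{increase} to $1$ as $h\to\infty$, so the hypothesis $R(\mathcal{M},r)>r\,\pi^{1/p^{h-1}}$ of Theorem \ref{fro} becomes \emph{harder}, not easier, to satisfy for large $h$: for a non-Robba module it fails outright once $h$ is large. Moreover, passing to an antecedent of order $h$ replaces the ratio $R(\mathcal{M},r)/r$ by its $p^h$-th power, which (being $<1$) \emph{decreases}; iterating antecedents drives the module away from solvability, never toward it. Your claimed dichotomy --- that a one-slope non-Robba module must acquire a solvable antecedent --- is false: $d/dx-a$ for suitable $a$ is one-slope, non-Robba, and no antecedent of it is ever solvable. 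Even if you could reach a solvable module, solvability does not give boundedness on the generic disc (the solutions of $\partial X=N x^{-1}X$ with $N$ nilpotent involve $\log$, which is unbounded on the open unit disc), so the first step of your plan would still not deliver the estimate you need. The correct use of Frobenius antecedents here (and the one the paper makes in Lemma \ref{density}) is the opposite: choose $h$ with $\pi^{1/p^{h-1}}<R(\mathcal{M},\rho)/\rho<\pi^{1/p^{h}}$, so that the order-$h$ antecedent has relative radius $<\pi$, where Young's transfer theorem \cite[Theorem 3.1]{Y} (via a cyclic vector) gives analytic \emph{bounded} solutions on the generic disc; then push boundedness back down through the pullbacks as in Lemma \ref{bound-fro}.

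You are also missing the step where the one-slope hypothesis actually does its work. The cyclic-vector/Young argument only produces boundedness at \emph{some} point of each subinterval (the cyclic vector and the denominators $q_i$ force a shrinkage to a smaller subinterval), i.e.\ it yields a \emph{dense} set of good radii, not all of them. The passage from a dense set to every $r\in I$ is where one writes $R(\mathcal{M},r)=|\alpha|r^{\beta}$ and observes that $\|\tfrac{G_n}{n!}\|_{r}R(\mathcal{M},r)^{n}=\|\tfrac{G_n}{n!}\alpha^{n}x^{n\beta}\|_{r}$ is the annulus norm of an element of $\mathcal{H}([r_1,r_2])$, hence satisfies $|f|_r\le\max(|f|_{r_1},|f|_{r_2})$; this log-convexity is the interpolation mechanism, and it is absent from your proposal.
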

The proof of this theorem   is based on the following lemmas:
\begin{lemma}\label{bound-fro}
Assume  $R(\mathcal{M},r)> \pi r$ for all $r\in I$ and let
$\mathcal{N}$ be a Frobenius  antecedent of $\mathcal{M}$. Let $F$
be an associated matrix to $\mathcal{N}$ and assume there exists a
real $r_0\in I$ such that $\disp\sup_{n\ge
0}||\frac{F_n}{n!}||_{r_0^p}R(\mathcal{N},r_0^p)^n<\infty$. Then
$\disp\sup_{n\ge 0}||\frac{G_n}{n!}||_{r_0}R(\mathcal{M},r_0)^n<\infty$.
\end{lemma}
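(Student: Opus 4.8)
The plan is to transfer the analyticity and boundedness of the solution matrix of $\mathcal{N}$ at $t_{r_0}^{p}$ to that of $\mathcal{M}$ at $t_{r_0}$ through the substitution $z=x^{p}$, controlling everything by the Gauss norm. First I would dispose of two bookkeeping issues by a change of basis. On the one hand, $R(\mathcal{M},r)$ is insensitive to the choice of associated matrix by \cite[Proposition 1.3]{CD}; on the other hand, the finiteness of $\sup_{n}\|G_{n}/n!\|_{r_0}R(\mathcal{M},r_0)^{n}$ is also insensitive to it, because if $G'=H[G]$ with $H\in\GL_{\mu}(\mathcal{A}(I))$ then $\mathcal{U}_{G',t_{r_0}}(x)=H(x)\,\mathcal{U}_{G,t_{r_0}}(x)\,H(t_{r_0})^{-1}$, and the entries of $H$ and $H^{-1}$ lie in $\mathcal{A}(I)$, hence are analytic and bounded on $D(t_{r_0},r_0^{-})\supseteq D(t_{r_0},R(\mathcal{M},r_0)^{-})$. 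Arguing likewise for $\mathcal{N}$ over $\mathcal{A}(I^{p})$, I may assume that $G(x)=p\,x^{p-1}F(x^{p})$. Moreover, since $R(\mathcal{M},r)>\pi r$ on $I$, Theorem~\ref{fro} with $h=1$ — applied to $\mathcal{N}$, which is unique up to isomorphism in this range — gives $R(\mathcal{N},r_0^{p})=R(\mathcal{M},r_0)^{p}$. Write $\rho:=R(\mathcal{M},r_0)$, so that $\sigma:=R(\mathcal{N},r_0^{p})=\rho^{p}$ and $\pi r_0<\rho<r_0$.

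Next I would record the identity $\mathcal{U}_{G,t_{r_0}}(x)=\mathcal{U}_{F,t_{r_0}^{p}}(x^{p})$. If $Y(z)$ solves $\frac{d}{dz}X=FX$, then by the chain rule $\frac{d}{dx}Y(x^{p})=p\,x^{p-1}F(x^{p})Y(x^{p})=G(x)Y(x^{p})$; since $Q(u):=(t_{r_0}+u)^{p}-t_{r_0}^{p}$ has no constant term, $\mathcal{U}_{F,t_{r_0}^{p}}(x^{p})$ is a genuine power series in $u=x-t_{r_0}$ which equals $\bI_{\mu}$ at $x=t_{r_0}$, so by uniqueness of the solution matrix it equals $\mathcal{U}_{G,t_{r_0}}(x)$. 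Writing $\mathcal{U}_{F,t_{r_0}^{p}}(z)=\sum_{n\ge0}C_{n}(z-t_{r_0}^{p})^{n}$ with $C_{n}=F_{n}(t_{r_0}^{p})/n!$, this reads $\mathcal{U}_{G,t_{r_0}}(x)=\sum_{n\ge0}C_{n}\,Q(u)^{n}$, so $G_{m}(t_{r_0})/m!$ is the coefficient of $u^{m}$ in $\sum_{n}C_{n}Q(u)^{n}$.

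Then I would run the norm estimate. Extend the Gauss norm $|\sum a_{m}u^{m}|_{\rho}:=\sup_{m}|a_{m}|\rho^{m}$ entrywise to matrices over $\Omega_{p}[[u]]$; then $\sup_{m}\|G_{m}/m!\|_{r_0}\rho^{m}=|\mathcal{U}_{G,t_{r_0}}(x)|_{\rho}$, while the hypothesis is $\sup_{n}\|C_{n}\|\sigma^{n}=:M<\infty$. The crux is the equality $|Q|_{\rho}=\rho^{p}$: for $1\le k\le p-1$ the $u^{k}$-coefficient of $Q$ is $\binom{p}{k}t_{r_0}^{p-k}$, of absolute value $|p|\,r_0^{p-k}$, and $|p|\,r_0^{p-k}\rho^{k}=\rho^{p}\,|p|\,(r_0/\rho)^{p-k}<\rho^{p}\,|p|\,\pi^{-(p-k)}\le\rho^{p}\,|p|\,\pi^{-(p-1)}=\rho^{p}$ — using $r_0/\rho<\pi^{-1}$ and $\pi^{-(p-1)}=|p|^{-1}$ — while the $u^{p}$-coefficient contributes exactly $\rho^{p}$. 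Multiplicativity of the Gauss norm gives $|Q^{n}|_{\rho}=\rho^{pn}=\sigma^{n}$, and since $Q$ has $u$-adic valuation $1$ the sum $\sum_{n}C_{n}Q(u)^{n}$ converges coefficientwise, so the ultrametric inequality yields $|\mathcal{U}_{G,t_{r_0}}(x)|_{\rho}\le\sup_{n}\|C_{n}\|\,|Q^{n}|_{\rho}=\sup_{n}\|C_{n}\|\sigma^{n}=M<\infty$, which is the assertion $\sup_{m}\|G_{m}/m!\|_{r_0}R(\mathcal{M},r_0)^{m}<\infty$.

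The main obstacle is the estimate $|Q|_{\rho}=\rho^{p}$, i.e. that $x\mapsto x^{p}$ maps $D(t_{r_0},\rho^{-})$ into $D(t_{r_0}^{p},(\rho^{p})^{-})$: this is exactly the place where the bound $R(\mathcal{M},r_0)>\pi r_0$ is used and where the constant $\pi$ is forced. Everything else — invariance under change of basis, the chain-rule identity, and multiplicativity of the Gauss norm — is routine.
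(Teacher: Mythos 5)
Your proof is correct and follows essentially the same strategy as the paper's: reduce by a change of basis to $G(x)=p\,x^{p-1}F(x^{p})$, substitute $z=x^{p}$ into the solution matrix at $t_{r_0}^{p}$, and exploit the fact that $R(\mathcal{M},r_0)>\pi r_0$ forces $x\mapsto x^{p}$ to contract $D(t_{r_0},\rho)$ into $D(t_{r_0}^{p},\rho^{p})$, which is exactly where the constant $\pi$ enters in both arguments. The only difference is one of implementation: where the paper cites \cite[Lemma 3.1]{BC} for the disk-mapping property and invokes \cite[Proposition 2.3.3]{C} twice to pass bounds from $\rho<R(\mathcal{M},r_0)$ to the limit, you get the bound directly at $\rho=R(\mathcal{M},r_0)$ from the explicit Gauss-norm computation $|Q|_{\rho}=\rho^{p}$ together with multiplicativity, which is self-contained and slightly cleaner.
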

\begin{proof}
The matrix
$\disp\mathcal{V}(z)=(\mathcal{V}_{ij}(z))_{ij}=\mathcal{V}_{F,t_{r_0}^p}(z)=\sum_{n\ge
0}\frac{F_n(t_{r_0}^p)}{n!}(z-t_{r_0}^p)^n$ is the solution matrix of
the differential system $\frac{d}{dz}V(z) =F(z)V(z)$ in the neighborhood of
$t_{r_0}^p$ with $z=x^p$ and $\mathcal{V}(t_{r_0^p})=\bI_\mu$. The change of variables leads to
$\frac{d}{dx}\mathcal{V}(x^p)=px^{p-1}F(x^p)\mathcal{V}(x^p)$. In
addition, since $R(\mathcal{M},r_0)> \pi r_0$, the map $x\mapsto
x^p$ sends the closed disk $D(t_{r_0},R(\mathcal{M},r_0))$ into
$D(t_{r_0}^p,R(\mathcal{M},r_0)^p)=D(t_{r_0}^p,R(\mathcal{N},r_0^p))$
\cite[Lemma 3.1]{BC}, and $$\sup_{n\ge
0}\Big\|\frac{F_n(t_{r_0}^p)}{n!}\Big\|.|x^p-t_{r_0}^p|^n=\sup_{n\ge
0}\Big\|\frac{F_n(t_{r_0}^p)}{n!}\Big\|.|x^{p-1}+x^{p-1}t_{r_0}+\ldots+t_{r_0}^{p-1}|^n.|x-t_{r_0}|^n<\infty$$
for all $x\in D(t_{r_0},R(\mathcal{M},r_0))$. In the neighborhood of
$t_{r_o}$, the matrix $\mathcal{V}_{F,t_{r_0}^p}(x^p)$ can be written
as $\mathcal{V}(x^p)=\disp\sum_{n\ge 0} B_n(x-t_{r_0})^n$ where
$B_n=(B_n(i,j))_{ij}$ are $\nu\times\nu$ matrices with entries un
$\Omega$. In that case,  we have $\disp\lim_{n\to
\infty}|B_n(i,j)|\rho^n=0$ for any $\rho <R(\mathcal{M},r_0)$, and therefore
\begin{equation}
\label{eqboun1} \sup_{n\ge 0}|B_n(i,j)|\rho^n=\sup_{x\in
D(t_{r_0},\rho)}|\mathcal{V}_{ij}(x^p)|\le \sup_{z\in
D(t_{r_0}^p,\rho^p)}|\mathcal{V}_{ij}(z)|=\sup_{n\ge
0}\Big\|\frac{F_n(t_{r_0}^p)}{n!}\Big\|\rho^{pn}.
\end{equation}
Since the matrix $\mathcal{V}(z)$ is  analytic and bounded in
$D(t_{r_0}^p,R(\mathcal{N},r_0^p)^-)$, there exists a positive
constant $C>0$, by  \cite[Proposition 2.3.3]{C}, such that
\begin{equation}
\label{eqboun2} \sup_{n\ge
0}\Big\|\frac{F_n(t_{r_0}^p)}{n!}\Big\|\rho^{pn}<C
\end{equation}
 for any
$\rho<R(\mathcal{M},r_0)$  and close to $R(\mathcal{M},r_0)$. Combining \eqref{eqboun1} and
\eqref{eqboun2}, and using again \cite[Proposition 2.3.3]{C}, we
find $\sup_{n\ge 0}|B_n(i,j)|R(\mathcal{M},r_0)^n<\infty$ for all $1\le i,j\le
\nu$, and therefore, the matrix $\mathcal{V}(x^p)$ is analytic and
bounded in the disk $D(t_{r_0},R(\mathcal{M},r_0)^-)$. In addition,
since the matrix $px^{p-1}F(x^p)$ is associated to $\mathcal{M}$,
then there exists an invertible matrix $H\in
\GL_\mu(\mathcal{A}(I))$ (hence $H$ is analytic and bounded in the
disk $D(t_{r_0},R(\mathcal{M},r_0)^-)$) such that
$G=H[px^{p-1}F(x^p)]$. Thus, by \cite[Proposition 2.3.2]{C}, the
matrix $H\mathcal{V}(x^p)$ is a solution to the system $\partial X=GX$
in the  neighborhood of $t_{r_0}$, and moreover it is analytic and
bounded in the disk $D(t_{r_0},R(\mathcal{M},r_0)^-)$. This means
that $\mathcal{U}_{G,t_{r_0}}(x)= H\mathcal{V}(x^p)H(t_{r_0})^{-1}$ is also analytic and bounded in the disk
$D(t_{r_0},R(\mathcal{M},r_0)^-)$.
\end{proof}
\begin{lemma}\label{density}
The set of reals $r$ in $I$ for which $\sup_{n\ge
0}\|\frac{G_n}{n!}\|_{r}R(\mathcal{M},r)^n<\infty$ is dense in $I$.
\end{lemma}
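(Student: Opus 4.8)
The plan is to prove density by a local argument at every point of $I$, combined with descent through a tower of Frobenius antecedents. Say a point $r$ is \emph{good} (for $\mathcal{M}$) if $\sup_{n\ge0}\|G_n/n!\|_{r}R(\mathcal{M},r)^{n}<\infty$, and similarly for any differential module with a chosen associated matrix in place of $(\mathcal{M},G)$. Fix $r^{\ast}\in I$ and $\varepsilon>0$; I will produce a good point within distance $\varepsilon$ of $r^{\ast}$, by pushing the generic radius of convergence below the critical threshold $\pi r$, applying there a classical boundedness estimate, and then climbing back up the tower by repeated use of Lemma~\ref{bound-fro}.

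Set $\alpha^{\ast}=R(\mathcal{M},r^{\ast})/r^{\ast}$, so $0<\alpha^{\ast}<1$ because $\mathcal{M}$ is non-Robba. By \cite[Theorem~2]{P} the graph of $\rho\mapsto\log R(\mathcal{M},e^{\rho})$ is piecewise affine, hence $r\mapsto R(\mathcal{M},r)/r$ is monotone on short intervals; moreover the thresholds $\pi<\pi^{1/p}<\pi^{1/p^{2}}<\cdots$ form a discrete subset of $(0,1)$ accumulating only at $1$. Assuming first that $\alpha^{\ast}$ is not one of these thresholds (the exceptional case is discussed at the end), I may choose a closed subinterval $J\ni r^{\ast}$ inside $I\cap(r^{\ast}-\varepsilon,r^{\ast}+\varepsilon)$ and an integer $h\ge0$ with: either $h=0$ and $R(\mathcal{M},r)<\pi r$ on $J$, or $h\ge1$ and $\pi^{1/p^{h-1}}<R(\mathcal{M},r)/r<\pi^{1/p^{h}}$ on $J$. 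When $h\ge1$, the hypothesis of Theorem~\ref{fro} holds on $J$, producing a Frobenius antecedent $\mathcal{N}_{h}$ of order $h$ over $\mathcal{A}(J^{p^{h}})$ with $R(\mathcal{N}_{h},\rho)=R(\mathcal{M},\rho^{1/p^{h}})^{p^{h}}$; raising the bounds on $R(\mathcal{M},\cdot)/\cdot$ to the power $p^{h}$ gives $\pi^{p}<R(\mathcal{N}_{h},\rho)/\rho<\pi$ throughout the closed interval $J^{p^{h}}$.

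Next I would invoke the base case: if $\mathcal{N}$ is a differential module over $\mathcal{A}(K)=\mathcal{H}(K)$, $K$ a closed subinterval, with $R(\mathcal{N},\rho)<\pi\rho$ on all of $K$, then every point of $K$ is good for $\mathcal{N}$ — the Dwork--Robba estimate for modules whose generic radius stays strictly below $\pi r$ (see \cite{C},\cite{CD}). Applying it to $\mathcal{N}_{h}$, or directly to $\mathcal{M}$ when $h=0$, shows every point of $J^{p^{h}}$ is good for $\mathcal{N}_{h}$. Fix any $r_{0}\in J$, so that $r_{0}^{p^{h}}$ is good for $\mathcal{N}_{h}$, and form the tower $\mathcal{M}=\mathcal{N}_{0},\mathcal{N}_{1},\dots,\mathcal{N}_{h}$ of successive order-one Frobenius antecedents, $\mathcal{N}_{k}$ living over $\mathcal{A}(J^{p^{k}})$. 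For $0\le k<h$ one has $R(\mathcal{N}_{k},\rho)/\rho=\bigl(R(\mathcal{M},\rho^{1/p^{k}})/\rho^{1/p^{k}}\bigr)^{p^{k}}>(\pi^{1/p^{h-1}})^{p^{k}}=\pi^{1/p^{h-1-k}}\ge\pi$ on $J^{p^{k}}$, so the hypothesis of Lemma~\ref{bound-fro} is met and that lemma carries goodness at $r_{0}^{p^{k+1}}$ for $\mathcal{N}_{k+1}$ down to goodness at $r_{0}^{p^{k}}$ for $\mathcal{N}_{k}$. After $h$ steps $r_{0}\in J$ is good for $\mathcal{M}$, and since $r^{\ast}$ and $\varepsilon$ were arbitrary, the good set is dense in $I$.

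The step I expect to be the main obstacle is the boundary case set aside above, where $R(\mathcal{M},r)/r$ is constantly equal to some threshold $\pi^{1/p^{j}}$ on a subinterval near $r^{\ast}$. There the order-$j$ Frobenius antecedent $\mathcal{N}_{j}$ (with $\mathcal{N}_{0}=\mathcal{M}$) is \emph{solvable}: $R(\mathcal{N}_{j},\rho)=\pi\rho$ on a closed interval, so no further antecedent is available and the Dwork--Robba estimate does not apply at equality. Closing this gap requires the structure theory of solvable $p$-adic differential modules of Christol--Mebkhout \cite{CM}, from which one extracts that a solvable module over $\mathcal{H}(K)$ still has an analytic and bounded solution matrix at every generic point; the descent via Lemma~\ref{bound-fro} then runs exactly as before. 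The remaining points are routine bookkeeping: that all inequalities feeding Theorem~\ref{fro} and Lemma~\ref{bound-fro} can be made strict after shrinking $J$, and that the invoked boundedness statements genuinely bound $\sup_{n}\|F_{n}/n!\|_{\rho}R(\mathcal{N}_{h},\rho)^{n}$ rather than only the radius of convergence of the Taylor solution.
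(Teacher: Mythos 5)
Your proof follows essentially the same route as the paper's: on a small subinterval around the target point locate an integer $h$ with $\pi^{1/p^{h-1}}<R(\mathcal{M},r)/r<\pi^{1/p^{h}}$, descend via Theorem~\ref{fro} to an order-$h$ Frobenius antecedent satisfying $R<\pi\rho$, apply the small-radius boundedness result there (the paper does this through a cyclic vector and Young's theorem, which is the Dwork--Robba-type estimate you invoke), and climb back up by iterating Lemma~\ref{bound-fro}. The boundary case you single out --- $R(\mathcal{M},r)/r$ identically equal to a threshold $\pi^{1/p^{j}}$ on a subinterval, where no strict $h$ exists --- is the only point of divergence: the paper simply asserts the strict inequalities and never addresses it, so your treatment is if anything the more careful one, although your appeal to Christol--Mebkhout to get bounded solutions of the limiting module with $R=\pi\rho$ remains a sketch rather than a proof.
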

\begin{proof}
Let $J$ be a closed subinterval of $I$ not reduced to a point and let $\rho$ be a real number in the interior of $J$.
 Then, by hypothesis, $R(\mathcal{M},\rho)/\rho <1$ and therefore there exists an  integer $h$ such that $\pi^{1/p^{h-1}}<R(\mathcal{M},\rho)/\rho<\pi^{1/p^{h}}$.
Since the function $r\mapsto R(\mathcal{M},r)$ is continuous on $J$, there exists
 an open subinterval $J'\subset J$ containing $\rho$
such that $\pi^{1/p^{h-1}}r<R(\mathcal{M},r)<\pi^{1/p^{h}}r$ for all $r \in J'$. \\
There are two cases to consider:\\
\textbf{Case 1: $h\le 0$.}\\ Let $\dot{\mathcal{H}}(J')$ be the
quotient field of $\mathcal{H}(J')$. By cyclic vector lemma, we can
associate $\dot{\mathcal{H}}(J')\otimes\mathcal{M}$  to a
differential equation
$\Delta(\dot{\mathcal{H}}(J)\otimes\mathcal{M})=
\partial^\mu+q_1(x)\partial^{\mu-1}+\ldots+q_\mu(x)$, where
$q_i\in \dot{\mathcal{H}}(J')$ for $i=1,\ldots,\mu$.  Now pick a
nonempty subinterval $J''$ of $J'$ such that $q_i\in
{\mathcal{H}}(J'')$ for $i=1,\ldots,\mu$, and let  $r_0$ be a real number
in the interval $J''$ and $\lambda(r_0)$ be the maximum of the $p$-adic absolute
values of the roots of the polynomial
$\Delta(\dot{\mathcal{H}}(J)\otimes\mathcal{M})=
\lambda^\mu+q_1(t_{r_0})\lambda^{\mu-1}+\ldots+q_\mu(t_{r_0})$.
Since
$R(\mathcal{M},r_0)=R(\dot{\mathcal{H}}(J)\otimes\mathcal{M},r_0)<\pi^{1/p^{h}}r_0<\pi
r_0$, by virtue of  \cite[Theorem 3.1]{Y}, we have $\log
(R(\mathcal{M},r_0))=\frac{1}{p-1}+\log(\lambda(r_0))$ and all the
solutions $u_1,\ldots,u_\mu$ of
$\Delta(\dot{\mathcal{H}}(J)\otimes\mathcal{M})$ in the neighborhood
of $t_{r_0}$ are analytic  and bounded in the disk
$D(t_{r_0},R(\mathcal{M},r_0)^-)$. Now let $W$ be the wronskian matrix
of $(u_1,\ldots,u_\mu)$. Then, $W$ is a solution of the system
$$\partial
X=A_{\Delta(\dot{\mathcal{H}}(J)\otimes\mathcal{M})}X\;\;\mbox{where}\;\;
A_{\Delta(\dot{\mathcal{H}}(J)\otimes\mathcal{M})}:= \left (
\begin{array}{c}
0\\0\\\vdots\\0\\-q_{\mu}
\end{array}
\begin{array}{c}
1\\0\\\vdots\\0\\-q_{\mu-1}
\end{array}
\begin{array}{c}
0\\1\\\vdots\\0\\-q_{\mu-2}
\end{array}
\begin{array}{c}
\ldots\\\ldots\\\ddots\\ \\\ldots
\end{array}
\begin{array}{c}
0\\0\\\vdots\\1\\-q_{1}
\end{array}
\right).$$ Moreover, by  \cite[Proposition 2.3.2]{C}, the matrix
$W$ is analytic and bounded in the disk
$D(t_{r_0},R(\mathcal{M},r_0)^-)$. Since $G$ and
$A_{\Delta(\dot{\mathcal{H}}(J)\otimes\mathcal{M})}$ are associated
to ${\mathcal{H}}(J'')\otimes\mathcal{M}$, there exists a
matrix $H\in \GL_\mu({\mathcal{H}}(J''))$ such that
$G=H[A_{\Delta(\dot{\mathcal{H}}(J)\otimes\mathcal{M})}]$. Since
$R(\mathcal{M},r_0)<r_0$, the matrix $H$  is analytic and bounded in
the disk $D(t_{r_0},R(\mathcal{M},r_0)^-)$. Hence, by \cite[Proposition 2.3.2]{C}, the matrix
$\mathcal{U}_{G,t_{r_0}}(x)=HWH(t_{r_0})^{-1}$ is
also analytic and bounded in the disk
$D(t_{r_0},R(\mathcal{M},r_0))$. This ends the proof of the lemma in
this case.\\
\textbf{Case 2: $h>0$.}\\
Applying Theorem \ref{fro} to ${\mathcal{H}}(J')\otimes\mathcal{M}$,
there exists a $\mathcal{H}(J'^{p^h})$-differential module
$\mathcal{N}_h$ which is  a Frobenius antecedent of order $h$ of
${\mathcal{H}}(J')\otimes\mathcal{M}$. Moreover,
$R(\mathcal{N}_h,\rho)<\pi \rho$ for all $\rho\in J'^{p^h}$. Let
$^hF$ be an associated matrix of $\mathcal{N}_h$. Then, by case 1,
there exists $r_0\in J'$ such that $^hF$ is  analytic and bounded in
the disk $D(t_{r_0}^{p^h},R(\mathcal{N}_h,r_0^{p^h}))$. The proof of
the lemma in this case can be concluded by iteration of Lemma
\ref{bound-fro}.
\end{proof}
\begin{proof}[Proof of Theorem 2.1] By hypothesis, the generic polygon of convergence of
 $\mathcal{M}$ has only one slope. This slope is a rational number by \cite[Theorem 2]{P}. Thus, we may assume there exist $\alpha\in \bC_p$ and
$\beta\in \bQ$ such that $R(\mathcal{M},r)=|\alpha|r^\beta$ for all
$r\in I$. \\
Let now $r$ be a real in the interior of $I$. Then,  by Lemma
\ref{density}, there exist two reals $r_1, r_2\in I$ such that
$r_1<r<r_2$ and
$$\sup_{n\ge
0}\|\frac{G_n}{n!}\|_{r_1}R(\mathcal{M},r_1)^n<\infty\quad
\text{and}\quad\sup_{n\ge
0}\|\frac{G_n}{n!}\|_{r_2}R(\mathcal{M},r_2)^n<\infty,$$ which are
equivalent to
$$\sup_{n\ge
0}\|\frac{G_n}{n!}\alpha^n x^{n\beta}\|_{r_1}<\infty\quad
\text{and}\quad\sup_{n\ge 0}\|\frac{G_n}{n!}\alpha^n
x^{n\beta}\|_{r_2}<\infty.$$ Since all  the matrices $\alpha^n
x^{n\beta}G_n$ have all their entries in $\mathcal{H}[r_1,r_2]$, and
for any  element $f\in \mathcal{H}([r_1,r_2])$, we have
$|f|_r\le\max(|f|_{r_1},|f|_{r_2})$,  then  for any integer $n\ge
0$, we have
\begin{multline*}
\|\frac{G_n}{n!}\|_{r}R(\mathcal{M},r)^n 
\le\|\frac{G_n}{n!}\alpha^n x^{n\beta}\|_{r}\\
\quad\le
\max(\|\frac{G_n}{n!}\alpha^n
x^{n\beta}\|_{r_1},\|\frac{G_n}{n!}\alpha^n x^{n\beta}\|_{r_2})\\
\le \max(\sup_{n\ge 0}\|\frac{G_n}{n!}\alpha^n
x^{n\beta}\|_{r_1},\sup_{n\ge 0}\|\frac{G_n}{n!}\alpha^n
x^{n\beta}\|_{r_2}).
\end{multline*}
Hence,   $$\sup_{n\ge 0}\|\frac{G_n}{n!}\|_{r}R(\mathcal{M},r)^n\le
\max(\sup_{n\ge 0}\|\frac{G_n}{n!}\alpha^n
x^{n\beta}\|_{r_1},\sup_{n\ge 0}\|\frac{G_n}{n!}\alpha^n
x^{n\beta}\|_{r_2})<\infty.$$
\end{proof}

\end{document}